%% LyX 2.0.8.1 created this file.  For more info, see http://www.lyx.org/.
%% Do not edit unless you really know what you are doing.
\documentclass[11pt,oneside,english]{amsart}
\usepackage[T1]{fontenc}
\usepackage[latin9]{inputenc}
\usepackage{units}
\usepackage{amsthm}
\usepackage{amstext}
\usepackage{amssymb}
\usepackage{esint}
\textwidth=13cm
\makeatletter
%%%%%%%%%%%%%%%%%%%%%%%%%%%%%% Textclass specific LaTeX commands.
\numberwithin{equation}{section}
\numberwithin{figure}{section}
  \theoremstyle{plain}
  \newtheorem*{thm*}{\protect\theoremname}
\theoremstyle{plain}
\newtheorem{thm}{\protect\theoremname}
  \theoremstyle{plain}
  \newtheorem{lem}[thm]{\protect\lemmaname}

\makeatother

\usepackage{babel}
  \providecommand{\lemmaname}{Lemma}
  \providecommand{\theoremname}{Theorem}
\providecommand{\theoremname}{Theorem}

\begin{document}

\title{Hydrodynamic limit of the Boltzmann-Monge-Ampere system}

\author{Fethi Ben Belgacem}

\address{LR03ES04 Equations aux dérivées partielles et applications, Université
de Tunis El Manar, Faculté des Sciences de Tunis, 2092 Tunis, Tunisie.}

\email{fethi.benbelgacem@isimm.rnu.tn}

\keywords{Boltzman equation, Monge-Ampère equation, Euler equations of the
incompressible fluid.}
\subjclass[2010]{35F20, 35B40, 82D10.}
\begin{abstract}
In this paper we investigate the hydrodynamic limit of the Boltzmann-Monge-Ampere
system in the so-called quasineutral regime. We prove the convergence
of the Boltzmann-Monge-Ampere system to the Euler equation by using
the relative entropy method.
\end{abstract}
\maketitle

\section{Introduction and main results}

The goal of this article is to study the hydrodynamical limit of the
Boltzman-Monge-Ampere system (BMA)
\begin{align}
\partial_{t}f^{\varepsilon}+\xi.\nabla_{x}f^{\varepsilon}+\nabla_{x}\varphi^{\varepsilon}.\nabla_{\xi}f^{\varepsilon}= & Q(f^{\varepsilon},f^{\varepsilon}),\label{eq:B.M}\\
\mbox{det}\left(\mathbb{I}_{d}+\varepsilon^{2}D^{2}\varphi^{\varepsilon}\right)= & \rho^{\varepsilon},\label{eq:M.A}
\end{align}
 where $\mathbb{I}_{d}$ is the identity matrix and 
\begin{equation}
\rho^{\varepsilon}(t,x)=\int_{\mathbb{R}^{d}}f^{\varepsilon}(t,x,\xi)d\xi\label{eq:densit=0000E9-1-1}
\end{equation}
and $f^{\varepsilon}(t,x,\xi)\geq0$ is the electronic density at
time $t\geq0$ point $x\in\left[0,1\right]^{d}=\mathbb{T}^{d},$ and
with a velocity $\xi\in\mathbb{R}^{d}.$ The periodic electric potential
$\varphi^{\varepsilon}$ is coupled with $f^{\varepsilon}$ through
the nonlinear Monge-Ampere equation (\ref{eq:M.A}). The quantities
$\varepsilon>0$ and $Q(f^{\varepsilon},f^{\varepsilon})$ denote
respectively the vacum electric permitivity and the Boltzman collision
integral. This latter, is given by (see {[}3,9{]}) 
\[
Q(f^{\varepsilon},f^{\varepsilon})(t,x,\xi)=\int\int_{S_{+}^{d-1}\times\mathbb{R}^{d}}\left(\left(f^{\epsilon}\right)^{\prime}\left(f_{1}^{\varepsilon}\right)^{\prime}-f^{\varepsilon}f_{1}^{\varepsilon}\right)b\left(\xi-\xi_{1},\sigma\right)d\sigma d\xi_{1},
\]

where the terms $f_{1}^{\varepsilon},$ $\left(f^{\varepsilon}\right)^{\prime}$
and $\left(f_{1}^{\varepsilon}\right)^{\prime}$ defines, respectively
the values $f^{\varepsilon}(t,x,\xi_{1}),$ $f^{\varepsilon}(t,x,\xi^{\prime})$
and $f^{\varepsilon}(t,x,\xi_{1}^{\prime})$ with $\xi^{\prime}$
and $\xi_{1}^{\prime}$ given in terms of $\xi$, $\xi_{1}\in\mathbb{R}^{d},$
and $\sigma\in S_{+}^{d-1}=\left\{ \sigma\in S^{d-1}/\sigma.\xi\geq\sigma.\xi_{1}\right\} $
by 
\[
\xi^{\prime}=\frac{\xi+\xi_{1}}{2}+\frac{\xi-\xi_{1}}{2}\sigma,\:\:\xi_{1}^{\prime}=\frac{\xi+\xi_{1}}{2}-\frac{\xi-\xi_{1}}{2}\sigma.
\]

By linearising the determinant about the identity matrix $\mathbb{I}_{d},$
one get 
\[
\mbox{det}\left(\mathbb{I}_{d}+\varepsilon^{2}D^{2}\varphi^{\varepsilon}\right)=1+\varepsilon^{2}\triangle\varphi^{\varepsilon}+O\left(\varepsilon^{4}\right).
\]

It follows that the BMA system is a fully nonlinear version of the
Vlasov-Poisson-Boltzman (VPB) system defined by 
\begin{eqnarray}
\partial_{t}f^{\varepsilon}+\xi.\nabla_{x}f^{\varepsilon}+\nabla_{x}\varphi^{\varepsilon}.\nabla_{\xi}f^{\varepsilon} & = & Q(f^{\varepsilon},f^{\varepsilon})\label{eq:B-P-1}\\
\varepsilon^{2}\triangle\varphi^{\varepsilon} & = & \rho^{\varepsilon}-1\label{eq:B-P2-1}
\end{eqnarray}

This latter, has been interested many authors. In {[}5{]} DiPerna
and Lions showed the existence of renormalized solution. Desvilletes
and Dolbeault {[}7{]} are interested to the long-time behavior of
the weak solutions of the VPB system for the initial boundary problem.
In {[}10{]} Guo established the global existence of smooth solutions
to the VPB system in periodic boundary condition case. For more references
for this subject, Boltzmann equation or Vlasov\textendash{}Poisson
system, one can see {[}1-7, 9\textendash{}14{]}. 

In {[}11{]} L. Hsiao and al. studied the convergence of the VPB system
to the Incompressible Euler Equations. If one consider the case $Q(f^{\varepsilon},f^{\varepsilon})=0$,
we obtain the Vlasov-Monge-Ampère(VMA). This problem, was been considered
by Y. Bernier and Grégoire{[}2{]}. They showed that weak solution
of VMA converge to a solution of the incompressible Euler equations
when the parameter $\varepsilon$ goes to $0.$ 

This work aims to extend these efforts to study such systems.

First, Note that 
\[
\int_{\mathbb{R}^{d}}Q(f^{\varepsilon},f^{\varepsilon})d\xi=\int_{\mathbb{R}^{d}}\xi_{i}Q(f^{\varepsilon},f^{\varepsilon})d\xi=\int_{\mathbb{R}^{d}}\left|\xi\right|^{2}Q(f^{\varepsilon},f^{\varepsilon})d\xi=0,\mbox{ }i=1,2,...,d.
\]
and the conservation of total energy 
\begin{equation}
\frac{1}{2}\int_{\mathbb{R}^{d}}\int_{\mathbb{T}^{d}}\left|\xi\right|^{2}f^{\varepsilon}\left(t,x,\xi\right)dxd\xi+\frac{\varepsilon}{2}\int_{\mathbb{T}^{d}}\left|\nabla\phi^{\varepsilon}\left(t,x\right)\right|^{2}dx=E_{0}\label{eq:energy}
\end{equation}
where 
\[
J^{\varepsilon}\left(t,x\right)=\int_{\mathbb{R}^{d}}\xi f^{\varepsilon}\left(t,x,\xi\right)d\xi.
\]
From the conservation of mass and momentum, it follows that 
\begin{equation}
\partial_{t}\rho^{\varepsilon}+\nabla.J^{\varepsilon}=0\label{eq:ro}
\end{equation}
and 
\begin{equation}
\partial_{t}J^{\varepsilon}+\nabla_{x}.\int_{\mathbb{R}^{d}}\left(\xi\otimes\xi\right)f^{\varepsilon}d\xi+\nabla\phi^{\varepsilon}+\frac{\varepsilon}{2}\nabla\left(\left|\nabla\phi^{\varepsilon}\right|^{2}\right)-\varepsilon\nabla.\left(\nabla\phi^{\varepsilon}\otimes\nabla\phi^{\varepsilon}\right)=0.\label{eq:J}
\end{equation}

Let us consider the periodic boundary problem of Euler equations to
the incompressible fluid 
\begin{alignat}{1}
\nabla.u=0,\: t>0,\: x\in\mathbb{T}^{d}\label{eq:E1}\\
\partial_{t}u+\left(u.\nabla\right)u+\nabla p=0 & \: t>0,\: x\in\mathbb{T}^{d}\label{eq:E2}\\
u\left(0,x\right)=u_{0}\left(x\right)\in\mathcal{H}^{s},\label{eq:E3}
\end{alignat}
where the function space $\mathcal{H}^{s}$ is given by $\mathcal{H}^{s}=\left\{ u\in H^{s}\left(\mathbb{T}^{d}\right),\:\nabla.u=0\right\} .$

We have the following result.
\begin{thm*}
Let $0<T<T^{*}$ and $u_{0}$ in $\mathcal{H}^{s}\left(s>1+\frac{d}{2}\right),$
$\mathbb{Z}^{d}$ periodic in $x$. Assume that $f_{0}^{\varepsilon}\left(x,\xi\right)\geq0$
to be smooth, $\mathbb{Z}^{d}$ periodic in $x$, and $f_{0}^{\varepsilon}$
decays fast as $\xi\rightarrow\infty.$ In addition, we assume that
\[
\int_{\mathbb{R}^{d}}f_{0}^{\varepsilon}\left(x,\xi\right)d\xi=1+o\left(\varepsilon^{\frac{1}{2}}\right),\textrm{ as }\varepsilon\rightarrow0,
\]
in the strong sense of the space $H^{-1}\left(\mathbb{T}^{d}\right)$
and 
\[
\frac{1}{2}\int_{\mathbb{R}^{d}}\int_{\mathbb{T}^{d}}\left|\xi-u_{0}\left(x\right)\right|^{2}f_{0}^{\varepsilon}\left(x,\xi\right)dxd\xi+\frac{\varepsilon}{2}\int_{\mathbb{T}^{d}}\left|\nabla\phi^{2}\left(0,x\right)\right|^{2}dx\rightarrow0\textrm{ as }\varepsilon\rightarrow0.
\]
 Let $f^{\varepsilon}$ be any nonnegative smooth solution of (\ref{eq:B.M})-(\ref{eq:M.A}).
Then, up to the extraction of a subsequence, the current $J^{\varepsilon}$
converges weakly to the unique solution $u\left(x,t\right)$ of the
Euler equations (\ref{eq:E1})-(\ref{eq:E2})-(\ref{eq:E3}). Moreover,
the divergence free part of $f$ converges to $u$ in $L^{\infty}\left(\left[0,T\right],L^{2}\left(\mathbb{T}^{d}\right)\right).$ 
\end{thm*}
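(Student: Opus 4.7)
The plan is to use the relative entropy (modulated energy) method, as announced in the abstract and as developed by Brenier for the quasineutral limit of Vlasov-Poisson and extended by Brenier-Loeper (cf.~{[}2{]}) to the Vlasov-Monge-Amp\`ere system. I would introduce the modulated energy
\begin{equation*}
\mathcal{E}^{\varepsilon}(t)=\frac{1}{2}\int_{\mathbb{T}^{d}}\int_{\mathbb{R}^{d}}|\xi-u(t,x)|^{2}f^{\varepsilon}(t,x,\xi)\,d\xi\,dx+\frac{\varepsilon}{2}\int_{\mathbb{T}^{d}}|\nabla\varphi^{\varepsilon}(t,x)|^{2}\,dx,
\end{equation*}
where $u$ is the local strong solution of~(\ref{eq:E1})-(\ref{eq:E3}) on $[0,T]$, which exists by Kato-type theory since $u_{0}\in\mathcal{H}^{s}$ with $s>1+d/2$. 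The initial-data hypothesis exactly says $\mathcal{E}^{\varepsilon}(0)\to0$, so the whole proof reduces to establishing a Gronwall-type differential inequality
\begin{equation*}
\frac{d}{dt}\mathcal{E}^{\varepsilon}(t)\le C\bigl(\|u\|_{W^{1,\infty}}\bigr)\mathcal{E}^{\varepsilon}(t)+o_{\varepsilon}(1),
\end{equation*}
uniformly on $[0,T]$, whence $\mathcal{E}^{\varepsilon}(t)\to0$ uniformly by integration.

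To compute $\frac{d}{dt}\mathcal{E}^{\varepsilon}$, I would expand the square and combine the conservation laws~(\ref{eq:energy})-(\ref{eq:ro})-(\ref{eq:J}) for the moments of $f^{\varepsilon}$ with the Euler equations~(\ref{eq:E1})-(\ref{eq:E2}) used to rewrite $\partial_{t}u$. The collision operator drops out thanks to the stated identities $\int Q\,d\xi=\int\xi\,Q\,d\xi=\int|\xi|^{2}Q\,d\xi=0$. Decomposing the kinetic stress as
\begin{equation*}
\int\xi\otimes\xi\,f^{\varepsilon}\,d\xi=\int(\xi-u)\otimes(\xi-u)f^{\varepsilon}\,d\xi+J^{\varepsilon}\otimes u+u\otimes J^{\varepsilon}-\rho^{\varepsilon}u\otimes u,
\end{equation*}
the first quadratic piece is bounded by $\|\nabla u\|_{L^{\infty}}\mathcal{E}^{\varepsilon}$; the pressure piece $\int\nabla p\cdot J^{\varepsilon}\,dx$ vanishes thanks to $\nabla\cdot u=0$ after using the near-incompressibility $\rho^{\varepsilon}\to 1$; and the remaining cross-terms cancel by the transport structure of Euler, leaving only the electric-force contribution to be matched by the Monge-Amp\`ere side.

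The hard step is precisely this matching with the Monge-Amp\`ere nonlinearity. Unlike the Poisson case, one only has
\begin{equation*}
\rho^{\varepsilon}-1=\varepsilon^{2}\Delta\varphi^{\varepsilon}+\varepsilon^{4}R^{\varepsilon}[\varphi^{\varepsilon}],
\end{equation*}
where $R^{\varepsilon}$ is a polynomial in the $2\times 2$ and higher minors of $D^{2}\varphi^{\varepsilon}$. Following the line of reasoning of~{[}2{]}, I would use the energy bound~(\ref{eq:energy}) to obtain a uniform control $\sqrt{\varepsilon}\,\|\nabla\varphi^{\varepsilon}\|_{L^{2}}=O(1)$, combine it with Caffarelli-type $W^{2,p}$ regularity for the Monge-Amp\`ere operator on $\mathbb{T}^{d}$, and absorb the $\varepsilon^{4}R^{\varepsilon}$ contribution as an $o_{\varepsilon}(1)$ error; the quasineutrality assumption $\rho_{0}^{\varepsilon}=1+o(\varepsilon^{1/2})$ in $H^{-1}$ supplies the leading-order initial smallness needed to kick off the closed estimate. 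The Maxwell-stress terms $\varepsilon\nabla\cdot(\nabla\varphi^{\varepsilon}\otimes\nabla\varphi^{\varepsilon})-\frac{\varepsilon}{2}\nabla|\nabla\varphi^{\varepsilon}|^{2}$ in~(\ref{eq:J}), once tested against $u$ and integrated by parts, yield a term controlled by $\|\nabla u\|_{L^{\infty}}$ times the gradient part of $\mathcal{E}^{\varepsilon}$, which closes the Gronwall inequality. This is where the analysis departs from the VPB case of~{[}11{]} and constitutes the main technical obstacle.

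Once $\mathcal{E}^{\varepsilon}(t)\to0$ uniformly on $[0,T]$, Cauchy-Schwarz on $|J^{\varepsilon}-\rho^{\varepsilon}u|^{2}\le\rho^{\varepsilon}\int|\xi-u|^{2}f^{\varepsilon}\,d\xi$ combined with $\rho^{\varepsilon}\to 1$ in $L^{1}$ delivers the weak convergence $J^{\varepsilon}\rightharpoonup u$. Applying the Leray-Hodge projector $\mathbb{P}$ onto divergence-free fields, the curl-free part of the momentum is absorbed into $\nabla\varphi^{\varepsilon}$ (the only curl-free forcing in~(\ref{eq:J})), and the $L^{2}$-in-space bound on $\int|\xi-u|^{2}f^{\varepsilon}\,d\xi$ upgrades to strong convergence of $\mathbb{P}J^{\varepsilon}$ to $u$ in $L^{\infty}([0,T];L^{2}(\mathbb{T}^{d}))$, which is the final assertion of the theorem.
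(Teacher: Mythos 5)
Your overall strategy (modulated energy, Gronwall inequality, then Cauchy--Schwarz to identify the limit of $J^{\varepsilon}$) is the same relative-entropy scheme the paper follows, but at the two places where the paper actually does work, your proposal leaves genuine gaps. First, the Gronwall step: the paper does not prove $H^{\varepsilon}(t)\rightarrow0$ at all, it quotes it as Lemma 2 from [11]; you propose instead to derive $\frac{d}{dt}\mathcal{E}^{\varepsilon}\leq C\mathcal{E}^{\varepsilon}+o_{\varepsilon}(1)$ yourself, which is more ambitious, but the derivation is only announced at exactly the point where the Monge--Amp\`ere coupling differs from Poisson. Saying you would ``absorb the $\varepsilon^{4}R^{\varepsilon}$ contribution as an $o_{\varepsilon}(1)$ error'' by ``Caffarelli-type $W^{2,p}$ regularity'' is not an estimate: the remainder $\det D^{2}\varphi^{\varepsilon}$ is quadratic in second derivatives, and the only way it is controlled in the paper is through its divergence structure $\det D^{2}\phi^{\varepsilon}=\frac{1}{2}\mathrm{div}\left(\left(\mathrm{cof}\,D^{2}\phi^{\varepsilon}\right)\nabla\phi^{\varepsilon}\right)$ combined with the quantitative regularity bound $\left\Vert \mathrm{cof}\,D^{2}\phi^{\varepsilon}\right\Vert _{L^{2}}\lesssim\varepsilon^{-1/2}$ (cited from [8]) and the energy bound on $\sqrt{\varepsilon}\nabla\phi^{\varepsilon}$. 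Likewise your claim that the pressure term vanishes ``after using the near-incompressibility $\rho^{\varepsilon}\to1$'' is circular as stated: $\nabla\cdot J^{\varepsilon}=-\partial_{t}\rho^{\varepsilon}\neq0$ for fixed $\varepsilon$, so you need a quantitative rate for $\rho^{\varepsilon}-1$ (the paper gets $O(\varepsilon^{1/2})$ against $H^{1}$ test functions), not just qualitative convergence.

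Second, the compactness/identification step, which is the paper's Lemma 1 and its main original content, is missing from your proposal: you assume $\rho^{\varepsilon}\to1$ ``in $L^{1}$'' without proof, and you pass from smallness of the modulated energy to weak convergence of $J^{\varepsilon}$ without first extracting a limit. The paper establishes (i) $\rho^{\varepsilon}\to1$ in $C^{0}\left(\left[0,T\right],\mathcal{D}^{\prime}\right)$ via the determinant expansion, integration by parts and the cofactor bound above; (ii) $J^{\varepsilon}$ bounded in $L^{\infty}\left(\left[0,T\right],L^{1}\right)$ by energy and mass, hence weak-$*$ convergent in measures to some $J$; (iii) $J\in L^{\infty}\left(\left[0,T\right],L^{2}\right)$ and divergence free, using lower semicontinuity of the convex functional $K\left(\rho,J\right)=\int\frac{\left|J\right|^{2}}{2\rho}$; only then does the convexity argument with $h^{\varepsilon}\left(t\right)=\int\frac{\left|J^{\varepsilon}-\rho^{\varepsilon}u\right|^{2}}{2\rho^{\varepsilon}}\leq H^{\varepsilon}\left(t\right)$ give $J=u$. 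Your final paragraph gestures at the same Cauchy--Schwarz inequality but the statement that the $L^{2}$ bound ``upgrades to strong convergence of $\mathbb{P}J^{\varepsilon}$'' is asserted, not argued. To turn your plan into a proof you would need to supply the quantitative quasineutrality estimate of Lemma 1 (cofactor bound plus energy) and either prove your Gronwall inequality in full, handling the nonlinear determinant remainder with that same estimate, or, as the paper does, invoke an already-proved convergence result for the modulated energy.
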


\section{Proof of the theorem }

First introduce the modulated energy functional
\[
H^{\varepsilon}\left(t\right)=\frac{1}{2}\int_{\mathbb{R}^{d}}\int_{\mathbb{T}^{d}}\left|\xi-u\left(x\right)\right|^{2}f^{\varepsilon}\left(t,x,\xi\right)dxd\xi+\frac{\varepsilon}{2}\int_{\mathbb{T}^{d}}\left|\nabla\phi^{2}\left(t,x\right)\right|^{2}dx.
\]

In the squel we need the following two Lemmas
\begin{lem}
Under the hypothesis of the above theorem ,we have up to the extraction
of a sequence, $\rho^{\varepsilon}$ converges to 1 in $C^{0}\left(\left[0,T\right],\mathcal{D}^{\prime}\left(\mathbb{T}^{d}\right)\right),$
the current $J^{\varepsilon}$ converges to $J$ in $L^{\infty}\left(\left[0,T\right],\mathcal{D}^{\prime}\left(\mathbb{T}^{d}\right)\right),$
$J\in L^{\infty}\left(\left[0,T\right],L^{2}\left(\mathbb{T}^{d}\right)\right),$
and the divergence free parts of $J^{\varepsilon}$ converges to $J$
in $C^{0}\left(\left[0,T\right],\mathcal{D}^{\prime}\left(\mathbb{T}^{d}\right)\right).$\end{lem}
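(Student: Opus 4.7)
My plan is to organize the proof around the a priori energy bounds given by (\ref{eq:energy}) together with the local conservation laws (\ref{eq:ro}) and (\ref{eq:J}).

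First I would check that the initial total energy $E_{0}^{\varepsilon}$ is uniformly bounded in $\varepsilon$. Writing $|\xi|^{2}=|\xi-u_{0}|^{2}+2u_{0}\cdot\xi-|u_{0}|^{2}$ and integrating against $f_{0}^{\varepsilon}$, the first term is controlled by the initial modulated energy assumed to vanish, the third by $\|u_{0}\|_{L^{2}}^{2}$ (since $\int f_{0}^{\varepsilon}\,d\xi$ is close to $1$ in $H^{-1}$), and the cross term by Cauchy--Schwarz combined with the first two. Conservation of energy (\ref{eq:energy}) then yields the uniform bounds
\[
\int_{\mathbb{T}^{d}}\!\!\int_{\mathbb{R}^{d}}|\xi|^{2}f^{\varepsilon}(t,x,\xi)\,dx\,d\xi\leq C,\qquad\sqrt{\varepsilon}\,\|\nabla\phi^{\varepsilon}(t)\|_{L^{2}(\mathbb{T}^{d})}\leq C,
\]
for all $t\in[0,T]$.

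Next, for the convergence $\rho^{\varepsilon}\to1$, I would expand the Monge--Amp\`ere equation (\ref{eq:M.A}) as $\rho^{\varepsilon}-1=\sum_{k=1}^{d}\varepsilon^{2k}\sigma_{k}(D^{2}\phi^{\varepsilon})$, where $\sigma_{k}$ denotes the $k$-th elementary symmetric function of the Hessian eigenvalues. Each $\sigma_{k}(D^{2}\phi^{\varepsilon})$ is a null Lagrangian and can be written as a divergence of a tensor polynomial in $\nabla\phi^{\varepsilon}$ and $D^{2}\phi^{\varepsilon}$; testing against a smooth $\psi$ and integrating by parts extracts at least one factor of $\nabla\phi^{\varepsilon}$, and combined with $\|\nabla\phi^{\varepsilon}\|_{L^{2}}\lesssim\varepsilon^{-1/2}$ this gives an $O(\varepsilon^{3/2})$ contribution for $k=1$ and smaller contributions for $k\geq2$. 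Hence $\rho^{\varepsilon}(t,\cdot)\to1$ in $\mathcal{D}'(\mathbb{T}^{d})$ pointwise in $t$. Time equicontinuity is then supplied by (\ref{eq:ro}): Cauchy--Schwarz pointwise in $x$ gives $\|J^{\varepsilon}\|_{L^{\infty}([0,T],L^{1})}\leq C$, so $\partial_{t}\rho^{\varepsilon}=-\nabla\cdot J^{\varepsilon}$ is uniformly bounded in $L^{\infty}([0,T],W^{-1,1})$, and an Arzel\`a--Ascoli argument promotes the pointwise-in-$t$ distributional convergence to $C^{0}([0,T],\mathcal{D}')$.

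To handle $J^{\varepsilon}$ itself, the uniform $L^{\infty}(L^{1})$ bound yields, up to a subsequence, $J^{\varepsilon}\rightharpoonup J$ in distributions; the momentum equation (\ref{eq:J}), combined with the $O(1)$ bound on $\varepsilon|\nabla\phi^{\varepsilon}|^{2}$ in $L^{\infty}(L^{1})$, gives a uniform bound on $\partial_{t}J^{\varepsilon}$ in a negative Sobolev norm, upgrading the convergence to $L^{\infty}([0,T],\mathcal{D}')$. The $L^{\infty}(L^{2})$ membership of the limit $J$ will follow from lower semicontinuity applied to the pointwise inequality $|J^{\varepsilon}|^{2}\leq\rho^{\varepsilon}\int|\xi|^{2}f^{\varepsilon}\,d\xi$ together with $\rho^{\varepsilon}\to1$. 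For the divergence-free components, Helmholtz decomposition $J^{\varepsilon}=\mathcal{P}J^{\varepsilon}+\nabla q^{\varepsilon}$ with $\Delta q^{\varepsilon}=\nabla\cdot J^{\varepsilon}=-\partial_{t}\rho^{\varepsilon}$ gives $\nabla q^{\varepsilon}\to0$ in $C^{0}([0,T],\mathcal{D}')$ (since $\rho^{\varepsilon}-1\to0$ in that topology), so $\mathcal{P}J^{\varepsilon}\to J$ inherits the stronger time continuity from that of $\rho^{\varepsilon}$, and $J$ is automatically divergence free from $\partial_{t}(1)+\nabla\cdot J=0$. The main obstacle I anticipate is the rigorous control of the nonlinear Monge--Amp\`ere correction $\rho^{\varepsilon}-1-\varepsilon^{2}\Delta\phi^{\varepsilon}$: only the crude $\sqrt{\varepsilon}$-weighted $L^{2}$ bound on $\nabla\phi^{\varepsilon}$ is directly available from energy conservation, and extracting smallness from the higher-order minors requires exploiting their null-Lagrangian structure carefully, since no direct control of $D^{2}\phi^{\varepsilon}$ in any $L^{p}$ is produced by the energy estimate.
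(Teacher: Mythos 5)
Your overall strategy is the same as the paper's (energy bounds, divergence structure of the determinant, conservation laws, convexity and lower semicontinuity of $\left|J\right|^{2}/2\rho$ for the $L^{\infty}\left(L^{2}\right)$ membership of $J$), but two steps do not close as written.

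First, the control of the nonlinear Monge--Amp\`ere terms: the obstacle you flag at the end is precisely the point where the paper injects an ingredient you do not have. Writing $\rho^{\varepsilon}-1$ as a sum of minors $\sigma_{k}\left(D^{2}\phi^{\varepsilon}\right)$ and integrating by parts does extract one factor of $\nabla\phi^{\varepsilon}$, but for $k\geq2$ the remaining factors are second derivatives of $\phi^{\varepsilon}$, which the energy identity (\ref{eq:energy}) does not control in any $L^{p}$; your claim that these terms give ``smaller contributions'' is therefore unsupported, and you say so yourself. The paper (working in $d=2$) uses $\det D^{2}\phi^{\varepsilon}=\frac{1}{2}\mathrm{div}\left(\left(\mathrm{cof}\,D^{2}\phi^{\varepsilon}\right)\nabla\phi^{\varepsilon}\right)$ and then invokes an additional regularity estimate for the Monge--Amp\`ere equation, $\left\Vert \mathrm{cof}\,D^{2}\phi^{\varepsilon}\right\Vert _{L^{2}}\lesssim\varepsilon^{-\nicefrac{1}{2}}$ (its reference [8]), which together with $\sqrt{\varepsilon}\left\Vert \nabla\phi^{\varepsilon}\right\Vert _{L^{2}}\leq C$ closes the estimate at order $\varepsilon^{\nicefrac{1}{2}}$. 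Without this (or some equivalent a priori bound on the cofactor matrix), your argument for $\rho^{\varepsilon}\rightarrow1$ only handles the Laplacian term, so the step is genuinely incomplete rather than merely technical.

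Second, your treatment of the divergence-free part is flawed. You decompose $J^{\varepsilon}=\mathcal{P}J^{\varepsilon}+\nabla q^{\varepsilon}$ with $\Delta q^{\varepsilon}=\nabla\cdot J^{\varepsilon}=-\partial_{t}\rho^{\varepsilon}$ and claim $\nabla q^{\varepsilon}\rightarrow0$ in $C^{0}\left(\left[0,T\right],\mathcal{D}^{\prime}\right)$ ``since $\rho^{\varepsilon}-1\rightarrow0$ in that topology''. Uniform-in-time smallness of $\rho^{\varepsilon}-1$ gives no control on its time derivative: in quasineutral limits the gradient part of the current typically carries fast time oscillations of frequency $O\left(1/\varepsilon\right)$ and does not converge strongly in time, which is exactly why only the divergence-free part enjoys the $C^{0}$-in-time convergence. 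The argument the paper relies on (following its reference [11]) is different: apply the Leray projection to the momentum equation (\ref{eq:J}); the gradient terms $\nabla\phi^{\varepsilon}+\frac{\varepsilon}{2}\nabla\left(\left|\nabla\phi^{\varepsilon}\right|^{2}\right)$ are annihilated by the projection, the remaining stress terms $\int\left(\xi\otimes\xi\right)f^{\varepsilon}d\xi$ and $\varepsilon\,\nabla\phi^{\varepsilon}\otimes\nabla\phi^{\varepsilon}$ are bounded in $L^{\infty}\left(\left[0,T\right],L^{1}\right)$ by the energy, hence $\partial_{t}\mathcal{P}J^{\varepsilon}$ is bounded in $L^{\infty}\left(\left[0,T\right],W^{-1,1}\right)$, and Arzel\`a--Ascoli gives $\mathcal{P}J^{\varepsilon}\rightarrow J$ in $C^{0}\left(\left[0,T\right],\mathcal{D}^{\prime}\right)$, with $J$ divergence free from the continuity equation, as you correctly note. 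Your Arzel\`a--Ascoli argument for the time equicontinuity of $\rho^{\varepsilon}$ via $\partial_{t}\rho^{\varepsilon}=-\nabla\cdot J^{\varepsilon}$ is fine (and more explicit than the paper), and your preliminary verification that the initial energy is uniformly bounded is a sound addition; but the two gaps above, in particular the missing cofactor estimate, mean the proposal does not yet prove the lemma.
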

\begin{proof}
we take $d=2,$ and we notice that 
\[
\det\left(I+\varepsilon D^{2}\phi^{\varepsilon}\right)=1+\varepsilon\triangle\phi^{\varepsilon}+\varepsilon^{2}\det D^{2}\phi^{\varepsilon}.
\]
We first show that $\rho^{\varepsilon}\rightarrow1$ in $C^{0}\left(\left[0,T\right],\mathcal{D}^{\prime}\left(\mathbb{T}^{d}\right)\right).$
In fact, for $\eta\in C_{0}^{\infty}\left(\mathbb{T}^{d}\right),$
we get
\begin{align*}
\int\left(\rho^{\varepsilon}\left(t,x\right)-1\right)\eta\left(x\right)dx & =\int\left(\det\left(I+\varepsilon D^{2}\phi^{\varepsilon}\right)-1\right)\eta\left(x\right)dx\\
 & =\int\left(\varepsilon\triangle\phi^{\varepsilon}+\varepsilon^{2}\det D^{2}\phi^{\varepsilon}\right)\eta\left(x\right)dx.
\end{align*}
But 
\[
\det D^{2}\phi^{\varepsilon}=\frac{1}{2}\textrm{tr}\left(\left(\textrm{cof}D^{2}\phi^{\varepsilon}\right)D^{2}\phi^{\varepsilon}\right)=\frac{1}{2}\textrm{div}\left(\left(\textrm{cof}D^{2}\phi^{\varepsilon}\right)\nabla\phi^{\varepsilon}\right),
\]
it follows by integrating by parts that
\begin{alignat*}{1}
\int\left(\rho^{\varepsilon}\left(t,x\right)-1\right)\eta\left(x\right)dx & =\varepsilon\int\nabla\phi^{\varepsilon}\nabla\eta\left(x\right)dx+\frac{\varepsilon^{2}}{2}\int\textrm{div}\left(\left(\textrm{cof}D^{2}\phi^{\varepsilon}\right)\nabla\phi^{\varepsilon}\right)\eta\left(x\right)dx\\
 & =\varepsilon\int\nabla\phi^{\varepsilon}\nabla\eta\left(x\right)dx-\frac{\varepsilon^{2}}{2}\int\left(\textrm{cof}D^{2}\phi^{\varepsilon}\right)\nabla\phi^{\varepsilon}.\nabla\eta\left(x\right)dx.
\end{alignat*}
Thus, by the Hölder inequality one has 
\begin{alignat*}{1}
\left|\int\left(\rho^{\varepsilon}\left(t,x\right)-1\right)\right|\eta\left(x\right)dx & \leq\varepsilon^{\nicefrac{1}{2}}\left(\varepsilon\int\left|\nabla\phi^{\varepsilon}\right|^{2}\right)^{\nicefrac{1}{2}}\left(\int\left|\nabla\eta\right|^{2}\right)^{\nicefrac{1}{2}}+\\
 & +\frac{\varepsilon^{2}}{2}\left\Vert \textrm{cof}D^{2}\phi^{\varepsilon}\right\Vert _{L^{2}}\left\Vert \nabla\phi^{\varepsilon}\right\Vert _{L^{2}}\left\Vert \nabla\eta\right\Vert _{L^{2}}.
\end{alignat*}
Recall that from regularity result of Monge-Ampère equation we have
{[}8{]} 
\[
\left\Vert \textrm{cof}D^{2}\phi^{\varepsilon}\right\Vert _{L^{2}}\lesssim\varepsilon^{-\frac{1}{2}},
\]
So, by the conservation of the energy, one deduce that
\begin{alignat*}{1}
\left|\int\left(\rho^{\varepsilon}\left(t,x\right)-1\right)\eta\left(x\right)dx\right| & \leq C_{0}\varepsilon^{\nicefrac{1}{2}}\left\Vert \nabla\eta\right\Vert _{L^{2}}+C.\varepsilon^{\nicefrac{3}{2}}\left\Vert \nabla\phi^{\varepsilon}\right\Vert _{L^{2}}\left\Vert \nabla\eta\right\Vert _{L^{2}}.\\
 & \leq C_{0}\varepsilon^{\nicefrac{1}{2}}\left\Vert \nabla\eta\right\Vert _{L^{2}}+C.\varepsilon\left\Vert \nabla\eta\right\Vert _{L^{2}}\\
 & \leq\varepsilon^{\nicefrac{1}{2}}\left(C_{0}+C\varepsilon^{\nicefrac{1}{2}}\right)\left\Vert \nabla\eta\right\Vert _{L^{2}}
\end{alignat*}

\end{proof}
By the total energy equality (\ref{eq:energy}) we have 
\begin{equation}
\int\left|J^{\varepsilon}\left(t,x\right)\right|dx\leq\left(\int\int\left|\xi\right|^{2}f^{\varepsilon}\left(t,x,\xi\right)dxd\xi\right)^{\nicefrac{1}{2}}\left(\int\int f^{\varepsilon}\left(t,x,\xi\right)dxd\xi\right)^{\nicefrac{1}{2}}\leq C.\label{eq:inegality-1}
\end{equation}
 Thus $J^{\varepsilon}$ is bounded in $L^{\infty}\left(\left[0,T\right],L^{1}\left(\mathbb{T}^{d}\right)\right).$
Up to extracting a subsequence, we can assume that $J^{\varepsilon}$
has a limit $J$ in the sens of (Radon) measures on $\left[0,T\right]\times\nicefrac{\mathbb{R}^{d}}{\mathbb{Z}^{d}}=\mathbb{T}^{d}.$
Let us define as in {[}11{]}, for each non-negative function $z\left(t\right)\in C^{0}\left(\left[0,T\right]\right),$
the convex functional of a (Radon) measure
\begin{alignat*}{1}
K\left(\rho^{\varepsilon},J^{\varepsilon}\right) & =\int_{0}^{T}\frac{\left|J^{\varepsilon}\left(t,x\right)\right|^{2}}{2\rho^{\varepsilon}\left(t,x\right)}z\left(t\right)dxdt\\
 & =\sup_{b}\int_{0}^{T}\left\{ -\frac{1}{2}\left|b\left(t,x\right)\right|^{2}\rho^{\varepsilon}\left(t,x\right)+b\left(t,x\right)J^{\varepsilon}\left(t,x\right)\right\} z\left(t\right)dt.
\end{alignat*}
where $b$ belongs to the space of all continuous functions from $\left[0,T\right]\times\mathbb{T}^{d}$
to $\mathbb{R}^{d}.$ From (\ref{eq:inegality-1}) and since the functional
$K$ is lower semi-continuous with respect to the convergence of measure,
it follows that 
\[
\int_{0}^{T}z\left(t\right)\left(\int\left|J\left(t,x\right)\right|^{2}dx\right)dt\leq C\int_{0}^{T}z\left(t\right)dt,
\]
which means that $J\in L^{\infty}\left(\left[0,T\right],L^{2}\left(\mathbb{T}^{d}\right)\right).$

From (\ref{eq:ro}) and (\ref{eq:J}) one write 
\[
\partial_{t}\rho^{\varepsilon}=\partial_{t}\mbox{det}\left(\mathbb{I}+\varepsilon^{2}D^{2}\varphi^{\varepsilon}\right)=-\nabla J^{\varepsilon},
\]
 thus 
\[
\nabla J^{\varepsilon}=-\varepsilon\partial_{t}\triangle\phi^{\varepsilon}-\varepsilon^{2}\partial_{t}\det D^{2}\phi^{\varepsilon}.
\]
For $\eta\in C_{0}^{\infty}\left(\mathbb{T}^{d}\right),$ we have
\[
\int\nabla J^{\varepsilon}\eta\left(x\right)dx=-\varepsilon\int\partial_{t}\left(\triangle\phi^{\varepsilon}\eta\right)dx-\varepsilon^{2}\int\partial_{t}\det D^{2}\phi^{\varepsilon}\eta dx,
\]
thus $J$ is divergence free in $x$ in the sense of distribution.

By (\ref{eq:J}), we deduce that $\partial_{t}J$ is bounded in $L^{\infty}\left(\left[0,T\right],D^{\prime}\left(\mathbb{T}^{d}\right)\right).$
So, we obtain that up to the exraction of a subsequance, $J\in C^{0}\left(\left[0,T\right],L^{2}\left(\mathbb{T}^{d}\right)-w\right).$ 

In the same way , we can show that the divergence -free part of $J^{\varepsilon}$
converges to $J$ in $C^{0}\left(\left[0,T\right],D^{\prime}\left(\mathbb{T}^{d}\right)\right).$
Since $J^{\varepsilon}$ converges to $J$, it remains to show that
$J=u$ in $L^{\infty}\left(\left[0,T\right],L^{2}\left(\mathbb{T}^{d}\right)\right).$
For this, it suffies to use the next Lemma.
\begin{lem}
{[}11{]}Let $u$ be the unique solution of the Euler equations (\ref{eq:E1})-(\ref{eq:E2})
with initial datum and $u_{0}$ and the hypotheses of theorem 1 hold.
Then, for any $t\in\left(0,T\right],$ $H^{\varepsilon}\left(t\right)\rightarrow0$
as $\varepsilon\rightarrow0.$
\end{lem}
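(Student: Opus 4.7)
The plan is to adapt the modulated-energy / relative-entropy strategy of Brenier [2] and Hsiao et al.\ [11] to the Monge--Amp\`ere setting. My aim is to derive a Gr\"onwall-type inequality of the form $\tfrac{d}{dt}H^{\varepsilon}(t)\leq C\,\|\nabla u(t)\|_{L^{\infty}}\,H^{\varepsilon}(t)+R^{\varepsilon}(t)$ with $R^{\varepsilon}(t)\to 0$ uniformly on $[0,T]$, and then conclude using the hypothesis $H^{\varepsilon}(0)\to 0$.

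First I would expand the square in $H^{\varepsilon}$ and use the total-energy identity \eqref{eq:energy} to rewrite $H^{\varepsilon}(t)=E_{0}-\int u\cdot J^{\varepsilon}\,dx+\tfrac{1}{2}\int |u|^{2}\rho^{\varepsilon}\,dx$. Differentiating in $t$ and substituting the local conservation laws \eqref{eq:ro}--\eqref{eq:J} for $(\rho^{\varepsilon},J^{\varepsilon})$ together with the Euler equations \eqref{eq:E1}--\eqref{eq:E2} for $u$, I would apply the algebraic identity $\xi\otimes\xi=(\xi-u)\otimes(\xi-u)+u\otimes\xi+\xi\otimes u-u\otimes u$ to reorganize the kinetic momentum flux. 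After integrations by parts that exploit $\nabla\cdot u=0$, the principal contribution is $\int\nabla u:(\xi-u)\otimes(\xi-u)\,f^{\varepsilon}\,dx\,d\xi$, which is dominated by $\|\nabla u\|_{L^{\infty}}H^{\varepsilon}(t)$ and thus plays the role of the Gr\"onwall term.

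The remaining pieces are the pressure contribution and the three electric terms generated by \eqref{eq:J}. The pressure term collapses, via $\nabla\cdot u=0$ and \eqref{eq:ro}, to an integral of $p$ against $\rho^{\varepsilon}-1$, which vanishes thanks to the quantitative distributional estimate proved for Lemma 1. For the electric terms I would integrate by parts against the smooth divergence-free $u$ and then use two ingredients: the energy bound $\varepsilon\,\|\nabla\phi^{\varepsilon}\|_{L^{2}}^{2}\leq 2E_{0}$, and the Caffarelli-type Monge--Amp\`ere regularity $\|\textrm{cof}\,D^{2}\phi^{\varepsilon}\|_{L^{2}}\lesssim\varepsilon^{-1/2}$ already invoked in the proof of Lemma 1. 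This bounds each of them by $O(\varepsilon^{1/2})$, so they can be absorbed into $R^{\varepsilon}(t)$.

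The main obstacle is precisely this last step. Unlike in the Vlasov--Poisson setting of [11], the Monge--Amp\`ere coupling generates a fully nonlinear cubic term $\varepsilon\int u\cdot\nabla\cdot(\nabla\phi^{\varepsilon}\otimes\nabla\phi^{\varepsilon})\,dx$ which is not controlled by energy conservation alone; the estimate only closes because one can combine the energy bound on $\sqrt{\varepsilon}\,\nabla\phi^{\varepsilon}$ with the regularity bound on $\textrm{cof}\,D^{2}\phi^{\varepsilon}$, and because the regularity $s>1+\tfrac{d}{2}$ places $\nabla u$ in $L^{\infty}$ via Sobolev embedding. Once these estimates are in place, Gr\"onwall's lemma applied to $\tfrac{d}{dt}H^{\varepsilon}\leq C\,H^{\varepsilon}+o(1)$, together with the assumed $H^{\varepsilon}(0)\to 0$, yields $H^{\varepsilon}(t)\to 0$ for every $t\in(0,T]$.
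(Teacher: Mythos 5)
Note first that the paper itself offers no proof of this lemma: it is imported from [11] (Hsiao--Li--Wang), where it is proved for the Vlasov--Poisson--Boltzmann system by essentially the computation you outline, and the Monge--Amp\`ere adaptation goes back to Brenier--Loeper [2]. So your overall architecture --- rewrite $H^{\varepsilon}(t)=E_{0}-\int u\cdot J^{\varepsilon}dx+\tfrac12\int|u|^{2}\rho^{\varepsilon}dx$ via energy conservation, differentiate, insert (\ref{eq:ro})--(\ref{eq:J}) and the Euler equations, isolate $\int\nabla u:(\xi-u)\otimes(\xi-u)f^{\varepsilon}dxd\xi$ as the Gr\"onwall term, and use the smallness of $\rho^{\varepsilon}-1$ from Lemma 1 for the pressure --- is the right one and is what the citation tacitly relies on.

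Two of your steps, however, fail as stated. First, the electric term $\varepsilon\int u\cdot\nabla\cdot(\nabla\phi^{\varepsilon}\otimes\nabla\phi^{\varepsilon})dx$ is not $O(\varepsilon^{1/2})$ from the energy bound plus the cofactor estimate: those only give $\varepsilon\|\nabla\phi^{\varepsilon}\|_{L^{2}}^{2}\leq 2E_{0}$ and $\varepsilon\|D^{2}\phi^{\varepsilon}\|_{L^{2}}\|\nabla\phi^{\varepsilon}\|_{L^{2}}\lesssim 1$, i.e.\ an $O(1)$ bound, so this term cannot be pushed into the remainder $R^{\varepsilon}$. The correct treatment is structural: after integrating by parts against $u$ it is bounded by $2\|\nabla u\|_{L^{\infty}}\cdot\tfrac{\varepsilon}{2}\|\nabla\phi^{\varepsilon}\|_{L^{2}}^{2}\leq 2\|\nabla u\|_{L^{\infty}}H^{\varepsilon}(t)$, i.e.\ it is absorbed into the Gr\"onwall coefficient precisely because the electric energy is included in the modulated energy; the other two electric terms in (\ref{eq:J}) are exact gradients and vanish identically against the divergence-free $u$, so no estimate is needed there at all. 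Second, the pressure contribution does not collapse at fixed time to $\int p\,(\rho^{\varepsilon}-1)dx$: the piece $\int\nabla p\cdot J^{\varepsilon}dx=\int p\,\partial_{t}\rho^{\varepsilon}dx$ carries a time derivative of $\rho^{\varepsilon}$ that you do not control pointwise, so you must work with the time-integrated Gr\"onwall inequality, integrate this term by parts in $t$, and apply the Lemma 1--type bound on $\rho^{\varepsilon}-1$ (uniformly in $t$) against both $p$ and $\partial_{t}p$; only the piece $\int u\cdot\nabla p\,(\rho^{\varepsilon}-1)dx$ is handled directly by that bound. With these two repairs the inequality closes and, together with $H^{\varepsilon}(0)\rightarrow0$, Gr\"onwall gives $H^{\varepsilon}(t)\rightarrow0$ uniformly on $[0,T]$ as claimed.
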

To end the proof of the Theorem, we define a new functional 
\begin{equation}
h^{\varepsilon}\left(t\right)=\int\frac{\left|J^{\varepsilon}\left(t,x\right)-\rho^{\varepsilon}\left(t,x\right)u\left(t,x\right)\right|^{2}}{2\rho^{\varepsilon}\left(t,x)\right)}dx.\label{eq:hepsilon-1}
\end{equation}
With $b$ belongs to the space of all continuous functions from $\mathbb{T}^{d}$
to $\mathbb{R}^{d}.$ By the Cauchy-Shwarz inequality, one get 
\[
h^{\varepsilon}\left(t\right)\leq\frac{1}{2}\int\left|\xi-u\left(t,x\right)\right|^{2}f^{\varepsilon}\left(t,x,\xi\right)dxd\xi\leq H^{\varepsilon}\left(t\right).
\]
Since $\rho^{\varepsilon}\rightarrow1,$ $J^{\varepsilon}\rightarrow J$
and from the convexity of the functional defined by (\ref{eq:hepsilon-1}),
we obtain 
\[
\int\left|J\left(t,x\right)-u\left(t,x\right)\right|^{2}dx\leq2\lim_{\varepsilon\rightarrow0}h^{\varepsilon}\left(t\right)\leq2\lim_{\varepsilon\rightarrow0}H^{\varepsilon}\left(t\right)=0.
\]

This finish the proof of Theorem 1.2.


\begin{thebibliography}{10}
\bibitem{key-1}Y. Brenier, \textit{Polar factorization and monotone
rearrangement of vector-valued functions}. Pure Appl. Math., \textbf{\textit{44}}
(1991), 375-417.

\bibitem{key-1-1}Y. Brenier, Grégoire Loeper, \textit{A geometric
approximation to the Euler equations}: \textit{the Vlasov-Monge-Ampère
system}, Geom. Funct. Anal. \textbf{\textit{14}} (2004), 1182-1218.

\bibitem{key-1-1}Cercignani, C. The Boltzmann equation and its applications,
Applied Mathematical Sciences, 67. Springer, New York, (1988).

\bibitem{key-3}Cercignani, C., Illner, R., Pulvirenti, M. \textit{The
mathematical theory of dilute gases}, Applied Mathematical Sciences,
106, Springer, New York, (1994).

\bibitem{key-3-1} DiPerna, R. J., Lions, P. L., \textit{On the Cauchy
problem for Boltzmann equations: global existence and weak stability}.
Ann. of Math., \textbf{130}, (1989), 321\textendash{}366.

\bibitem{key-5-1} DiPerna, R. J., Lions, P. L., \textit{Global weak
solution of Vlasov-Maxwell systems}. Comm. Pure Appl. Math., \textbf{42}
(1989), 729\textendash{}757.

\bibitem{key-4} Desvillettes, L., Dolbeault, J.: \textit{On long
time asymptotics of the Vlasov\textendash{}Poisson\textendash{}Boltzmann
equation}. Comm. Partial Differential Equations, \textbf{16} (1991),
451\textendash{}489.

\bibitem{key-11}X. Feng, \textit{Convergence of the vanishing moment
method for the Monge-Ampère equations in two spatial dimensions.}
To appear in Trans. AMS.

\bibitem{key-1}Glassey, R. T.: \textit{The Cauchy problem in kinetic
theory}, Society for Industrial and Applied Mathematics (SIAM), Philadelphia,
(1996).

\bibitem{key-8-1}Guo, Y., \textit{The Vlasov\textendash{}Poisson\textendash{}Boltzmann
system near Maxwellians.} Comm. Pure Appl. Math. \textbf{55}, (2002),
1104\textendash{} 1135. 

\bibitem{key-12}Ling Hsiao, Fu Cai LI, Shu Wang, \textit{Convergence
of the Vlasdov-Poisson-Boltzmann System to the Incompressible Euler
Equations}. Acta Mathematica Sinica, English Series. \textbf{23} (2007),
761-768.

\bibitem{key-12}Liu, T. P., Yang, T., Yu, S. H.,\textit{ Energy method
for Boltzmann equation}. Physica D. \textbf{188}, (2004), 178\textendash{}192. 

\bibitem{key-7} Yang, T., Yu, H. J., Zhao, H. J. \textit{Cauchy problem
for the Vlasov\textendash{}Poisson\textendash{}Boltzmann system},
Arch Rational Mech Anal. \textbf{182} (2006), 415\textendash{}470.

\bibitem{key-1-1}P Sone, Y.,\textit{ Kinetic theory and fluid dyamics}.
Birkäuser, Boston, (2002).\end{thebibliography}
\end{document}